\documentclass[12pt]{amsart}
\usepackage{amssymb,amsmath,amsthm,mathrsfs,multirow,xcolor,framed,url}
\usepackage[pdftex,
         pdfauthor={Rong Chen},
         pdftitle={temp},
         pdfsubject={MATHEMATICS},
         pdfkeywords={temp},
         pdfproducer={Latex with hyperref},
         pdfcreator={pdflatex}]{hyperref}
\usepackage [latin1]{inputenc}
\newtheorem{theorem}{Theorem}[section]
\newtheorem{lemma}[theorem]{Lemma}

\newtheorem{conj}[theorem]{Conjecture}

\theoremstyle{definition}

\theoremstyle{remark}
\newtheorem{remark}[theorem]{Remark}

\numberwithin{equation}{section}

\newcommand\nutwid{\overset {\text{\lower 3pt\hbox{$\sim$}}}\nu}

\allowdisplaybreaks

\newcommand\omycite[1]{}

\newcommand{\beqs}{\begin{equation*}}
\newcommand{\eeqs}{\end{equation*}}
\newcommand{\beq}{\begin{equation}}
\newcommand{\eeq}{\end{equation}}

\begin{document}
\title[A Generalization of the Amdeberhan-Andrews-Ballantine Conjecture]{A Generalization of the Amdeberhan-Andrews-Ballantine Conjecture}

\author{Rong Chen}
\address{Department of Mathematics, Shanghai Normal University, People's Republic of China}
\email{rchen@shnu.edu.cn}
\author{Tianjian Xu}
\address{Department of Mathematics, Shanghai University, People's Republic of China}
\email{xtjmath@shu.edu.cn}

\subjclass[2010]{11B65, 05A15}

\keywords{Lambert series, double Lambert series, generalized divisor function}

\begin{abstract}
In this paper, we prove a generalization of a conjecture of Amdeberhan, Andrews, and Ballantine on double Lambert series. Motivated by a question raised by Cui, Kumar, and Singh concerning the existence of a generalization of this conjecture, we establish an identity in which the coefficients are given by the generalized divisor function $\sigma_k(n)$. As a special case, our result includes the original conjecture.
\end{abstract}
\maketitle
\section{Introduction}
In 1771, J. H. Lambert \cite{lam65} showed that the generating function of the divisor function can be expressed as
\begin{align*}
\sum_{n=1}^\infty\frac{q^n}{1-q^n}.
\end{align*}
This fundamental identity initiated the study of a broader class of series now known as Lambert series. In its classical form, a Lambert series is given by
\begin{align*}
\sum_{n=1}^\infty\frac{a_nq^n}{1-q^n}.
\end{align*}
In particular, when $a_n=n^{2k-1}$, the resulting Lambert series is a modular form for $k\geq2$ and is a quasimodular form when $k=1$. An excellent survey of existing key results and properties of Lambert series identities can be found in the work of Schmidt \cite{sch20}. Building on this classical framework, recent work has significantly broadened the scope of Lambert series theory. In particular, Amdeberhan, Andrews, and Ballantine \cite{amanba26} introduced an extensive generalization by considering generalized Lambert series of the form
\begin{align*}
\sum_{n\geq1}R_n(q^n,q),
\end{align*}
where $R_n(x,y)$ is a rational function of $x$ and $y$. Moreover, they developed a two-parameter extension called double Lambert series \cite{amanba26}, expressed by
\begin{align*}
\sum_{m,n\geq1}S_{n,m}(q^n,q^m,q),
\end{align*}
where $S_{n,m}(x,y,z)$ is a rational function of $x$, $y$, and $z$. They also established several connections between these series and Rogers-Ramanujan type $q$-series. In the course of their study, they formulated the following conjecture \cite{amanba26}.

\begin{conj}\cite[Conjecture 5.12]{amanba26}\label{AAB-conj-I}
Let $a$ be a positive integer. Then, for each positive integer $N$, we have
\begin{align*}
[q^{N2^a}]\sum_{m,n\geq 1}\frac{q^{mn2^a}}{(1+q^{n2^{a-1}})(1-q^{2m-1})}=\sigma_1(N),
\end{align*}
where $\sigma_1(n)$ is the sum of divisors of $n$.
\end{conj}

Recently, Cui, Kumar and Singh \cite{ks26} proved Conjecture \ref{AAB-conj-I}. In the conclusion of \cite{ks26}, Cui, Kumar and Singh asked whether there is a generalization of Conjecture \ref{AAB-conj-I} for the generalized divisor function
\begin{align*}
\sigma_k(n)=\sum_{d|n}d^k.
\end{align*}

In this paper, we first perform the following transformation on the left side of Conjecture \ref{AAB-conj-I}:
\begin{align*}
&\sum_{m,n\geq 1}\frac{q^{mn2^a}}{(1+q^{n2^{a-1}})(1-q^{2m-1})}\\
=&\sum_{m,n\geq 1}\frac{q^{mn2^a}}{(1-q^{n2^{a}})(1-q^{2m-1})}-\sum_{m,n\geq 1}\frac{q^{mn2^a+n2^{a-1}}}{(1-q^{n2^{a}})(1-q^{2m-1})}\\
=&\sum_{m,n\geq 1}\sum_{j\geq0}\frac{q^{mn2^a+nj2^a}}{1-q^{2m-1}}-\sum_{m,n\geq 1}\sum_{j\geq0}\frac{q^{mn2^a+n2^{a-1}+nj2^a}}{1-q^{2m-1}}\\
=&\sum_{m\geq 1}\sum_{j\geq0}\frac{q^{(m+j)2^a}}{(1-q^{2m-1})(1-q^{(m+j)2^{a}})}-\sum_{m\geq 1}\sum_{j\geq0}\frac{q^{(2m+2j+1)2^{a-1}}}{(1-q^{2m-1})(1-q^{(2m+2j+1)2^{a-1}})}\\
=&\sum_{n\geq 0}\sum_{m\geq n}\frac{q^{(m+1)2^a}}{(1-q^{2n+1})(1-q^{(m+1)2^{a}})}-\sum_{n\geq 0}\sum_{m\geq n}\frac{q^{(2m+3)2^{a-1}}}{(1-q^{2n+1})(1-q^{(2m+3)2^{a-1}})}.
\end{align*}
On this basis, we answer the question of Cui, Kumar and Singh by proving the following generalization of Conjecture \ref{AAB-conj-I} for the generalized divisor function $\sigma_k(n)$:
\begin{theorem}\label{g-AAB-conj-I}
Let $k$ and $a$ be positive integers. Then for each positive integer $N$, we have
\begin{align*}
[q^{N2^{a}}]\sum_{n=0}^\infty\sum_{m\geq n}\left(\frac{(m+1)^{k-1}q^{(m+1)2^a}}{(1-q^{2n+1})(1-q^{(m+1)2^a})}-\frac{(m-n+1)^{k-1}q^{(2m+3)2^{a-1}}}{(1-q^{2n+1})(1-q^{(2m+3)2^{a-1}})}\right)=\sigma_{k}(N).
\end{align*}
\end{theorem}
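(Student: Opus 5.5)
The plan is to extract the coefficient of $q^{N2^a}$ term by term, show that $2$-adic constraints make the count independent of $a$, and reduce the identity to an elementary bijection on solutions of $di+Mj=N$.

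\textbf{Expansion and removal of $a$.} In the first double sum put $M=m+1\ge 1$, so $0\le n\le M-1$, i.e.\ $d:=2n+1$ is odd with $d\le 2M-1$. Expand both denominators geometrically: the term contributes $M^{k-1}$ for every solution of
\[
M2^a j+d\,i=N2^a,\qquad j\ge 1,\ i\ge 0 .
\]
Since $d$ is odd, $2^a\mid i$. Writing $i=2^a i'$ gives $Mj+di'=N$.

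In the second sum put $L=2m+3$ and $d=2n+1$, so $L\ge d+2$ and $m-n+1=(L-d)/2=:r\ge1$. The solutions counted are those of $L2^{a-1}j+d\,i=N2^a$ with $j\ge1$, $i\ge0$, each with weight $r^{k-1}$. Now $2^{a-1}\mid i$, and after dividing we get $Lj+di''=2N$ with $L,d$ odd. This forces $j+i''$ to be even, say $j+i''=2s$. Substituting $L=d+2r$ gives
\[
d\,s+rj=N,\qquad s\ge1,\ 1\le j\le 2s,
\]
with weight $r^{k-1}$ and $d$ an arbitrary odd positive integer. Thus $a$ disappears entirely, and the theorem becomes $A-B=\sigma_k(N)$, where
\[
A=\sum_{\substack{Mj+di=N\\ d\text{ odd}<2M,\ j\ge1,\ i\ge0}}M^{k-1},\qquad
B=\sum_{\substack{Mj+di=N\\ d\text{ odd},\ i\ge1,\ 1\le j\le 2i}}M^{k-1},
\]
after renaming $r\to M$ and $s\to i$ in $B$. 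I will check this bookkeeping carefully, including the ranges of $m,n$ and that $s\ge1$ is automatic.

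\textbf{Isolating $\sigma_k(N)$.} The terms of $A$ with $i=0$ have $M\mid N$ and $M$ admissible choices of $d$ (the odd numbers below $2M$). They contribute $\sum_{M\mid N}M\cdot M^{k-1}=\sigma_k(N)$. It therefore remains to show, for each fixed $M$, that the number of triples $(d,i,J)$ with $d$ odd, $d<2M$, $i\ge1$, $J\ge1$, $dJ$-free form $di+MJ=N$ equals the number of triples $(d',i,j)$ with $d'$ odd, $i\ge1$, $1\le j\le 2i$ and $d'i+Mj=N$.

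\textbf{Bijection (main step).} Every integer $J\ge1$ is uniquely $J=j+2ti$ with $t\ge0$ and $1\le j\le 2i$; this is division with remainder shifted into $\{1,\dots,2i\}$. Every odd $d'\ge1$ is uniquely $d'=d+2Mt$ with $d$ odd, $d<2M$ and $t\ge0$. Since
\[
(d+2Mt)\,i+Mj=di+M(j+2ti),
\]
the map $(d,i,J)\mapsto(d+2Mt,\,i,\,j)$, where $J=j+2ti$, is a bijection between the two sets. It preserves $M$, hence preserves the weight $M^{k-1}$, so $A_{i\ge1}=B$ and the theorem follows.

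I expect the bijection itself to be straightforward. The main risk is the reduction step: getting the parity argument exactly right, the correspondence $L\leftrightarrow r$, and the ranges $i\ge0$ versus $s\ge1$ and $j\le 2s$. That bookkeeping is where I would spend the most care.
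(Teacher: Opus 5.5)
Your proposal is correct. I checked the bookkeeping. Oddness of $d$ forces $2^a\mid i$ in the first sum and $2^{a-1}\mid i$ in the second. The parity step $j+i''=2s$ gives $s\ge1$ and $1\le j\le 2s$ automatically. Under $L=d+2r$, $r$ runs over all positive integers independently of the odd $d$. The $i=0$ part of $A$ gives $\sum_{M\mid N}M\cdot M^{k-1}=\sigma_k(N)$. Finally, $(d,i,j+2ti)\mapsto(d+2Mt,i,j)$ is a bijection because the two shifted divisions, of $J$ by $2i$ and of $d'$ by $2M$, share the same quotient $t$.

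Your overall structure matches the paper's. Both proofs remove $a$ by a $2$-adic argument. Both then prove an identity separately for each value of the weight index ($m+1$ in the first sum, $m-n+1$ in the second), which is exactly what the paper's auxiliary variable $z$ records. The weights $M^{k-1}$ are constant on each such layer and so come for free; the paper implements this by applying $(z\frac{d}{dz})^{k-1}$ and setting $z=1$.

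The genuine difference is how the per-layer identity is proved. The paper proves Lemma~\ref{A-A=L}, which carries a free parameter $x$, and later specializes it by $q\to q^2$, $x=q^{-1}$. Its proof rests on the partial fraction $\frac{q^{2n+2M+1}}{(1-q^{2n+1})(1-q^{2n+2M+1})}=\frac{q^{2M}}{1-q^{2M}}\bigl(\frac{1}{1-q^{2n+1}}-\frac{1}{1-q^{2n+2M+1}}\bigr)$ and a sum that telescopes in blocks of length $M$. The paper stays with $[q^{2N}]$ and never needs your parity reduction, since its specialized identity holds coefficientwise for every power of $q$.

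Your bijection is the combinatorial version of that telescoping: your shift $d'\mapsto d'+2M$ is the paper's block shift $n\mapsto n+M$. Your route gives a self-contained counting proof. It also shows transparently why $a$ is irrelevant and why each divisor $M$ of $N$ is counted $M$ times (the $M$ odd residues modulo $2M$, which in the paper appear as the limiting boundary term $N+1$). The paper's route is shorter and yields the more general identity in $x$, which could be specialized in other ways.
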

We can observe that setting $k=1$ in Theorem \ref{g-AAB-conj-I} is equivalent to Conjecture \ref{AAB-conj-I}. The proof of Theorem \ref{g-AAB-conj-I} follows a strategy similar to that developed in the authors' recent work on the double Lambert series \cite{cx26}.

\section{A generalization of Conjecture \ref{AAB-conj-I}}\label{main}
In this section, we prove Theorem \ref{g-AAB-conj-I}, which gives a generalization of Conjecture \ref{AAB-conj-I}. We first give the following identity, which is similar to \cite[Corollary 3.7]{cx26}.
\begin{lemma}\label{A-A=L}
For $|zq|<1$ and $x\neq q^{-r}$ for the positive integer $r$, we obtain
\begin{align*}
\sum_{n=0}^\infty\sum_{m\geq n}\frac{z^mq^m}{(1-q^{m+1})(1-xq^{n+1})}-\sum_{n=0}^\infty\sum_{m\geq n}\frac{xz^{m-n}q^{m+1}}{(1-xq^{m+2})(1-xq^{n+1})}=\sum_{n=0}^\infty\frac{(n+1)z^nq^n}{1-q^{n+1}}.
\end{align*}
\end{lemma}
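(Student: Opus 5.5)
The plan is to regard both sides of Lemma \ref{A-A=L} as functions of $x$, with $z$ and $q$ fixed ($|q|<1$, $|zq|<1$). The right-hand side does not depend on $x$. So it suffices to show that the left-hand side, call it $F(x)$, is independent of $x$, and then evaluate it at a convenient point. At $x=0$ the second double sum vanishes. In the first, interchanging the order of summation gives
\begin{align*}
F(0)=\sum_{m\geq0}\sum_{n=0}^{m}\frac{z^mq^m}{1-q^{m+1}}=\sum_{m\geq0}\frac{(m+1)z^mq^m}{1-q^{m+1}},
\end{align*}
which is exactly the right-hand side. So everything reduces to showing $F(x)\equiv F(0)$.

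To prove $x$-independence I would use a Liouville-type argument. First, I would check that both double sums converge absolutely and locally uniformly on compact subsets of $\mathbb{C}\setminus\{q^{-r}:r\geq1\}$. The factors $z^mq^m$ and $z^{m-n}q^{m+1}$ provide geometric decay, while $1/(1-xq^{n+1})$ and $1/(1-xq^{m+2})$ tend to $1$ as $n,m\to\infty$. Hence $F$ is meromorphic, with at most simple poles at $x=q^{-r}$.

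Next, I would compute the principal part at $x=q^{-r}$ for each $r\geq1$. The first sum contributes through the terms with $n=r-1$, giving the coefficient
\begin{align*}
\frac{1}{1-xq^{r}}\sum_{m\geq r-1}\frac{z^mq^m}{1-q^{m+1}}.
\end{align*}
The second sum contributes in two ways. One is the terms with $n=r-1$ via $1/(1-xq^{n+1})$. The other, when $r\geq2$, is the terms with $m=r-2$ via $1/(1-xq^{m+2})$; these require $n\leq r-2$, so the two sources never coincide. After substituting $x=q^{-r}$ into the regular factors, I need the total residue to vanish. The second sum's contributions become
\begin{align*}
\sum_{m\geq r-1}\frac{z^{m-r+1}q^{m+1-r}}{1-q^{m+2-r}}\quad\text{and}\quad\sum_{n=0}^{r-2}\frac{z^{r-2-n}q^{-1}}{1-q^{n+1-r}}.
\end{align*}
Reindexing the second expression by $j=r-1-n$ and using $1/(1-q^{-j})=-q^j/(1-q^j)$ should turn it into a finite sum. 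That finite sum should combine with the first to cancel the tail of the first double sum's contribution. This residue cancellation is the step I expect to be the main obstacle, since the powers of $z$ in the two sums do not obviously match ($z^m$ versus $z^{m-r+1}$). If a direct match fails, I would first try rewriting $1/(1-q^{m+1})$ as $\sum_{\ell\geq0}q^{\ell(m+1)}$, swapping the order of summation, and recognizing both sides as the same two-variable geometric series.

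Finally, I would show that $F(x)\to F(0)$ or stays bounded as $x\to\infty$ along circles $|x|=|q|^{-r-1/2}$, which avoid the poles. Every term is bounded there: each factor $1/(1-xq^{s})$ is uniformly bounded, and $x/(1-xq^{m+2})$ is at most $C|q|^{-m-2}$, which the factor $q^{m+1}$ absorbs. Dominated convergence then controls the tails. With no poles and boundedness, $F$ is constant by Liouville, which completes the proof.

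As a fallback, I would bypass complex analysis altogether. Expand everything in powers of $x$ for $|x|<|q|^{-1}$ and prove directly that the coefficient of $x^j$ vanishes for every $j\geq1$. Concretely, this means showing that $\sum_{n}\sum_{m\geq n}z^mq^{m+j(n+1)}/(1-q^{m+1})$ equals $\sum_n\sum_{m\geq n}z^{m-n}q^{m+1}\sum_{a+b=j-1}q^{a(m+2)+b(n+1)}$, via geometric expansions and reindexing.
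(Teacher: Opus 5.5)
Your strategy (show that the left side $F(x)$ is independent of $x$, then evaluate at $x=0$) is legitimate and differs from the paper. The residue step you flagged as the main obstacle does work. With $j=m-r+1$, your first contribution from the second sum is the full series $\sum_{j\ge0}z^jq^j/(1-q^{j+1})$. Your finite sum becomes $-\sum_{i=0}^{r-2}z^iq^i/(1-q^{i+1})$. Together they equal exactly the tail $\sum_{m\ge r-1}z^mq^m/(1-q^{m+1})$ coming from the first sum, so every residue vanishes.

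The genuine gap is the growth estimate on $|x|=|q|^{-r-1/2}$. Your bounds $|1/(1-xq^{n+1})|\le C$ and $|x/(1-xq^{m+2})|\le C|q|^{-m-2}$ give only the majorant $C^2|q|^{-1}|z|^{m-n}$ for the general term of the second double sum. But $\sum_{n\ge0}\sum_{m\ge n}|z|^{m-n}$ diverges for every $z$, since there is no decay in $n$. So you have no summable dominating sequence, and dominated convergence yields nothing. The loss is not an artifact of crude bounding. When $m+2\le r$, the factor $x/(1-xq^{m+2})$ really does cancel all of $q^{m+1}$. The only remaining smallness is $|1/(1-xq^{n+1})|\le C|q|^{r-n-1/2}$ for $n+1\le r$, which you discarded by bounding this factor by a constant.

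To repair the argument, split into the regimes $m+2\le r$, $n+1\le r<m+2$, and $n\ge r$. These give majorants $C|z|^{m-n}|q|^{r-n-3/2}$ with $r-n-3/2\ge m-n+1/2$, then $C|zq|^{m-n}$ with $m-n\ge r-1-n$, and $C|zq|^{m-n}|q|^{n-r+1/2}$. Each is summable uniformly in $r$ because $|zq|<1$. Alternatively, use
\begin{align*}
\frac{xq^{m+1}}{(1-xq^{m+2})(1-xq^{n+1})}=\frac{q^{m-n}}{1-q^{m-n+1}}\left(\frac{1}{1-xq^{n+1}}-\frac{1}{1-xq^{m+2}}\right).
\end{align*}
After this, the sum over $n$ (for fixed $m-n$) telescopes and boundedness is immediate. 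At that point, however, the lemma is already proved and Liouville is superfluous.

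That identity is precisely the paper's proof. It extracts $[z^N]$, applies the identity with $m-n=N$, and telescopes to get $\frac{q^N}{1-q^{N+1}}\bigl(\sum_{n=0}^N\frac{1}{1-xq^{n+1}}-(N+1)\bigr)$ for the second sum. Its difference from the first sum's coefficient $\frac{q^N}{1-q^{N+1}}\sum_{n=0}^N\frac{1}{1-xq^{n+1}}$ is exactly $\frac{(N+1)q^N}{1-q^{N+1}}$. No function theory in $x$ is needed.

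Your fallback, by contrast, does go through as stated. For $j\ge1$, both sides of your proposed identity have $z^N$-coefficient
\begin{align*}
\frac{q^{N+j}\bigl(1-q^{j(N+1)}\bigr)}{(1-q^{j})(1-q^{N+1})},
\end{align*}
because $a(N+n+2)+b(n+1)=a(N+1)+(j-1)(n+1)$ when $a+b=j-1$. Combine this with absolute convergence of the expansions for $|x|<|q|^{-1}$ and analytic continuation to the connected set $\mathbb{C}\setminus\{q^{-r}:r\ge1\}$. That gives a complete alternative proof, though longer than the paper's telescoping.
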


\begin{proof}
Fix $N\geq 0$ and compare the coefficients of $z^N$ on both sides. For the right-hand side, we have
\begin{align*}
[z^N]\sum_{n=0}^\infty\frac{(n+1)z^nq^n}{1-q^{n+1}}=\frac{(N+1)q^N}{1-q^{N+1}}.
\end{align*}
For the left-hand side, we have
\begin{align*}
[z^N]\sum_{n=0}^\infty\sum_{m\geq n}\frac{z^mq^m}{(1-q^{m+1})(1-xq^{n+1})}=\frac{q^N}{1-q^{N+1}}\sum_{n=0}^N\frac{1}{1-xq^{n+1}}
\end{align*}
and
\begin{align}\label{2-sum}
[z^N]\sum_{n=0}^\infty\sum_{m\geq n}\frac{xz^{m-n}q^{m+1}}{(1-xq^{m+2})(1-xq^{n+1})}=\sum_{n\geq0}\frac{xq^{n+N+1}}{(1-xq^{n+N+2})(1-xq^{n+1})}.
\end{align}
We denote the infinite sum on the right-hand side of \eqref{2-sum} by $F(q)$. Then we have
\begin{align*}
F(q)=&\frac{q^N}{1-q^{N+1}}\sum_{n\geq0}\left(\frac{1}{1-xq^{n+1}}-\frac{1}{1-xq^{n+N+2}}\right)\\
=&\frac{q^N}{1-q^{N+1}}\lim_{M\rightarrow\infty}\sum_{n=0}^M\left(\frac{1}{1-xq^{n+1}}-\frac{1}{1-xq^{n+N+2}}\right)\\
=&\frac{q^N}{1-q^{N+1}}\lim_{M\rightarrow\infty}\left(\sum_{n=0}^N\frac{1}{1-xq^{n+1}}-\sum_{n=M+1}^{N+M+1}\frac{1}{1-xq^{n+1}}\right)\\
=&\frac{q^N}{1-q^{N+1}}\sum_{n=0}^N\frac{1}{1-xq^{n+1}}-\frac{q^N}{1-q^{N+1}}\lim_{M\rightarrow\infty}\sum_{n=0}^{N}\frac{1}{1-xq^{M+n+2}}\\
=&\frac{q^N}{1-q^{N+1}}\sum_{n=0}^N\frac{1}{1-xq^{n+1}}-\frac{(N+1)q^N}{1-q^{N+1}}.
\end{align*}
Therefore, we obtain
\begin{align*}
[z^N]\text{LHS}=\frac{(N+1)q^N}{1-q^{N+1}}=[z^N]\text{RHS}.
\end{align*}
\end{proof}

\begin{proof}[Proof of Theorem \ref{g-AAB-conj-I}]
We first consider the first series on the left-hand side:
\begin{align*}
&\sum_{n=0}^\infty\sum_{m\geq n}\frac{(m+1)^{k-1}q^{2^a(m+1)}}{(1-q^{2n+1})(1-q^{2^a(m+1)})}=\sum_{n=0}^\infty\sum_{m\geq n}\sum_{i=0}^\infty\frac{(m+1)^{k-1}q^{2^a(m+1)+(2n+1)i}}{1-q^{2^a(m+1)}}\\
=&\sum_{r=0}^{2^{a-1}-1}\sum_{n=0}^\infty\sum_{m\geq n}\sum_{i=0}^\infty\frac{(m+1)^{k-1}q^{2^a(m+1)+(2n+1)(2^{a-1}i+r)}}{1-q^{2^a(m+1)}}.
\end{align*}
We can observe that the power of $q$ in each term of the above series is congruent to $(2n+1)r$ modulo $2^{a-1}$. Therefore, we obtain
\begin{align*}
&[q^{N2^{a}}]\sum_{n=0}^\infty\sum_{m\geq n}\frac{(m+1)^{k-1}q^{2^a(m+1)}}{(1-q^{2n+1})(1-q^{2^a(m+1)})}=[q^{N2^{a}}]\sum_{n=0}^\infty\sum_{m\geq n}\sum_{i=0}^\infty\frac{(m+1)^{k-1}q^{2^a(m+1)+2^{a-1}(2n+1)i}}{1-q^{2^a(m+1)}}\\
&=[q^{2N}]\sum_{n=0}^\infty\sum_{m\geq n}\sum_{i=0}^\infty\frac{(m+1)^{k-1}q^{2(m+1)+(2n+1)i}}{1-q^{2m+2}}=[q^{2N}]\sum_{n=0}^\infty\sum_{m\geq n}\frac{(m+1)^{k-1}q^{2m+2}}{(1-q^{2m+2})(1-q^{2n+1})}.
\end{align*}
Similarly, we obtain
\begin{align*}
[q^{N2^{a}}]\sum_{n=0}^\infty\sum_{m\geq n}\frac{(m-n+1)^{k-1}q^{2^{a-1}(2m+3)}}{(1-q^{2n+1})(1-q^{2^{a-1}(2m+3)})}=[q^{2N}]\sum_{n=0}^\infty\sum_{m\geq n}\frac{(m-n+1)^{k-1}q^{2m+3}}{(1-q^{2n+1})(1-q^{2m+3})}.
\end{align*}
Thus the desired identity is equivalent to
\begin{align}\label{q2n-target}
\sum_{n=0}^\infty\sum_{m\geq n}\frac{(m+1)^{k-1}q^{2m+2}}{(1-q^{2m+2})(1-q^{2n+1})}-\sum_{n=0}^\infty\sum_{m\geq n}\frac{(m-n+1)^{k-1}q^{2m+3}}{(1-q^{2n+1})(1-q^{2m+3})}=\sum_{n=1}^\infty\sigma_k(n)q^{2n}.
\end{align}

Multiplying both side of Lemma \ref{A-A=L} by $zq$, replacing $q$ with $q^2$, and setting $x=q^{-1}$, we obtain
\begin{align}\label{in-lemma1}
\sum_{n=0}^\infty\sum_{m\geq n}\frac{z^{m+1}q^{2m+2}}{(1-q^{2m+2})(1-q^{2n+1})}-\sum_{n=0}^\infty\sum_{m\geq n}\frac{z^{m-n+1}q^{2m+3}}{(1-q^{2n+1})(1-q^{2m+3})}=\sum_{n=0}^\infty\frac{(n+1)z^{n+1}q^{2n+2}}{1-q^{2n+2}}.
\end{align}
For the right-hand side, we obtain
\begin{align*}
\sum_{n=0}^\infty\frac{(n+1)z^{n+1}q^{2n+2}}{1-q^{2n+2}}=\sum_{n=1}^\infty\frac{nz^nq^{2n}}{1-q^{2n}}=\sum_{n,k\geq1}nz^nq^{2nk}=\sum_{n=1}^\infty\Big(\sum_{d|n}dz^d\Big)q^{2n}.
\end{align*}
Then applying $z\frac{d}{dz}$ to both sides of \eqref{in-lemma1} $k-1$ times, we obtain
\begin{align*}
\sum_{n=0}^\infty\sum_{m\geq n}\frac{(m+1)^{k-1}z^{m+1}q^{2m+2}}{(1-q^{2m+2})(1-q^{2n+1})}-\sum_{n=0}^\infty\sum_{m\geq n}\frac{(m-n+1)^{k-1}z^{m-n+1}q^{2m+3}}{(1-q^{2n+1})(1-q^{2m+3})}=\sum_{n=1}^\infty\Big(\sum_{d|n}d^kz^d\Big)q^{2n}.
\end{align*}
Setting $z=1$ in the above identity, we complete the proof of \eqref{q2n-target}.
\end{proof}
\begin{remark}
We finally point out that the generalization obtained in this paper is expressed as the coefficient of the difference of two series. It would be interesting to find, if possible, a further generalization of Conjecture \ref{AAB-conj-I} in which $\sigma_k(n)$ arises as the coefficient of a single series.
\end{remark}
\subsection*{Acknowledgements}
The first author was supported by the National Natural Science Foundation of China (Grant No. 12401438).


\end{document}